\begin{document}

\newtheorem{theorem}{Theorem}
\newtheorem{corollary}[theorem]{Corollary}
\newtheorem{lemma}[theorem]{Lemma}
\newtheorem{proposition}[theorem]{Proposition}
\newtheorem{conjecture}[theorem]{Conjecture}
\newtheorem{commento}[theorem]{Comment}
\newtheorem{definition}[theorem]{Definition}
\newtheorem{problem}[theorem]{Problem}
\newtheorem{remark}[theorem]{Remark}
\newtheorem{remarks}[theorem]{Remarks}
\newtheorem{example}[theorem]{Example}

\newcommand{\Nb}{{\mathbb{N}}}
\newcommand{\Rb}{{\mathbb{R}}}
\newcommand{\Tb}{{\mathbb{T}}}
\newcommand{\Zb}{{\mathbb{Z}}}
\newcommand{\Cb}{{\mathbb{C}}}
\newcommand{\Qb}{{\mathbb{Q}}}

\newcommand{\Af}{\mathfrak A}
\newcommand{\Bf}{\mathfrak B}
\newcommand{\Ef}{\mathfrak E}
\newcommand{\Ff}{\mathfrak F}
\newcommand{\Gf}{\mathfrak G}
\newcommand{\Hf}{\mathfrak H}
\newcommand{\Kf}{\mathfrak K}
\newcommand{\Lf}{\mathfrak L}
\newcommand{\Mf}{\mathfrak M}
\newcommand{\Rf}{\mathfrak R}

\newcommand{\x}{\mathfrak x}

\def\A{{\mathcal A}}
\def\B{{\mathcal B}}
\def\C{{\mathcal C}}
\def\D{{\mathcal D}}
\def\F{{\mathcal F}}
\def\G{{\mathcal G}}
\def\H{{\mathcal H}}
\def\J{{\mathcal J}}
\def\K{{\mathcal K}}
\def\LL{{\mathcal L}}
\def\N{{\mathcal N}}
\def\M{{\mathcal M}}
\def\N{{\mathcal N}}
\def\OO{{\mathcal O}}
\def\P{{\mathcal P}}
\def\Q{{\mathcal Q}}
\def\SS{{\mathcal S}}
\def\T{{\mathcal T}}
\def\U{{\mathcal U}}
\def\W{{\mathcal W}}

\def\ext{\operatorname{Ext}}
\def\span{\operatorname{span}}
\def\clsp{\overline{\operatorname{span}}}
\def\Ad{\operatorname{Ad}}
\def\ad{\operatorname{Ad}}
\def\tr{\operatorname{tr}}
\def\id{\operatorname{id}}
\def\en{\operatorname{End}}
\def\aut{\operatorname{Aut}}
\def\out{\operatorname{Out}}
\def\hom{\operatorname{Hom}}
\def\im{\operatorname{im}}
\def\sa{\operatorname{sa}}

\def\la{\langle}
\def\ra{\rangle}
\def\rh{\rightharpoonup}

\title{On Cohomology for Product Systems}

\author{Jeong Hee Hong\footnote{This work was supported by National Research Foundation of Korea
Grant funded by the Korean Government (KRF--2008--313-C00039).}, 
Mi Jung Son$^{\ast}$,   
Wojciech Szyma{\'n}ski\footnote{This work was partially supported by  the FNU Project Grant 
`Operator algebras, dynamical systems and quantum information theory' (2013--2015).}}

\date{{\small 27 July 2014}}

\maketitle

\renewcommand{\sectionmark}[1]{}

\vspace{7mm}
\begin{abstract}
A cohomology for product systems of Hilbert bimodules is defined via the $\ext$ functor. For the class 
of product systems corresponding to irreversible algebraic dynamics, relevant resolutions are found explicitly and 
it is shown how the underlying product system can be twisted by the $2$-cocycles. In particular, this 
process gives rise to cohomological deformations of the $C^*$-algebras associated with the product system. 
Concrete examples of deformations of the Cuntz's algebra ${\mathcal Q}_\Nb$ arising this way are 
investigated and we show they are simple and purely infinite. 
\end{abstract}

\vfill\noindent {\bf MSC 2010}: 46L08, 46L65, 18G10

\vspace{3mm}
\noindent {\bf Keywords}: C*-algebra, cohomology, Hilbert bimodule, product system

\newpage

\section{Introduction}

Applications of cohomology to deformations of $C^*$-algebras and von Neumann algebras have been studied 
for decades, and yet they remain an active area of research in this field. Amongst more recent contributions, 
we would like to mention the works of Buss and Exel on inverse semigroups, \cite{BE}, and of Kumjian, Pask 
and Sims on higher-rank graphs, \cite{KPS}.  Often deformation of the $C^*$-algebra is related to a 
cohomological perturbation of another underlying object. A typical example of such process comes from  
a {\em twisted} (semi)group action leading to the {\em twisted} crossed product. 

In the present paper, we introduce a cohomology theory for product systems of Hilbert bimodules over 
discrete semigroups, as defined by Fowler in \cite{F}. Interestingly, better understanding of twisting of 
semigroup actions was one of the motivations behind the very introduction of such product systems, \cite{FR}. 
In Section 3, we take the classical point of view, \cite{B,H}, and define cohomology groups of a product 
system $X$ via the $\ext$ functor applied to a suitable module $\Mf$ of a ring $\Rf$ naturally associated 
with $X$. First examples include  cohomologies of groups, graphs, and certain product systems  
arising from semigroup actions on abelian groups. 

In Section 4, we restrict attention to a certain class of product systems arising from irreversible algebraic dynamics, 
corresponding to actions of discrete semigroups $P$ on compact groups. 
For such product systems, we construct explicitly a free resolution of module $\Mf$ and thus obtain 
working formulae for cocycles and coboundaries. The construction of the 
resolution takes advantage of the fact that all fibers $X_p$ 
of these systems $X= \bigsqcup_{p\in P}X_{p}$
are free modules over the coefficient $C^*$-algebra $A$. To each $2$-cocyle $\xi$ 
we associate a twisted product system $X^\xi$. The twisting is obtained by perturbing multiplication between 
the fibers. Then each twisted product system $X^\xi$ gives rise to several $C^*$-algebras, including the 
Toeplitz algebra $\T(X^\xi)$ and the Cuntz-Pimsner algebra $\OO(X^\xi)$. These algebras may be considered 
twisted versions of the Toeplitz algebra $\T(X)$ and the Cuntz-Pimsner algebra $\OO(X)$, respectively, 
associated with the original product system $X$. 

In Section 5, we test this deformation procedure on the product system $X$ whose Cuntz-Pimsner algebra 
$\OO(X)$ coincides with Cunt's algebra $\Q_\Nb$ associated to the $ax+b$-semigroup over $\Nb$, see 
\cite{C,HLS1}.  We look at certain numerical $2$-cocycles $\xi$ and show that the corresponding twisted 
$C^*$-algebras $\OO(X^\xi)$ are purely infinite and simple. 

\medskip\noindent
{\bf Acknowledgements.} 
The first named author would like to thank faculty and staff of the Department of Mathematics and 
Computer Science at the University of Southern Denmark in Odense for their warm hospitality during her 
sabbatical stay there in 2013--2014. The third named author is grateful to Alex Kumjian for useful 
discussions of graph cohomology, and to Nicolai Stammeier for valuable discussions and information on 
irreversible algebraic dynamics and their product systems.  


\section{Preliminaries on product systems}\label{Prelim-1}

Let $A$ be a $C^*$-algebra and $X$ be a complex vector space with a right action of $A$.
Suppose there is an $A$-valued inner product $\langle \cdot , \cdot \rangle _A$ on $X$ which is 
conjugate linear in the first variable and  satisfies
\begin{description}
\item{(i)} $\langle\xi, \eta\rangle _A =\langle\eta , \xi\rangle_A^*$,
\item{(ii)} $\langle\xi, \eta\cdot a\rangle_A=\langle\xi, \eta\rangle_A\, a$,
\item{(iii)} $\langle\xi, \xi\rangle_A \geq 0 $, and $\langle\xi, \xi\rangle_A=0 \; \Longleftrightarrow\; \xi=0$,
\end{description}
for $\xi,\eta\in X$ and $a\in A$. Then $X$ becomes a right Hilbert $A$-module
when it is complete with respect to the norm given by
$\|\xi\|:=\|\langle\xi,\xi\rangle _A\|^{\frac{1}{2}}$ for $\xi\in X$.

A module map $T:X\rightarrow X $ is said to be adjointable if there is a map $T^*:X\rightarrow X$
such that 
$$
\langle T\xi,\zeta\rangle_A=\langle\xi,T^*\zeta\rangle_A
$$ 
for all $\xi,\eta\in X$. An
adjointable map is automatically norm-bounded, and the set $\LL(X)$ of all adjointable
operators on $X$ endowed with the operator norm
is a $C^*$-algebra. The rank-one operator $\theta_{\xi, \eta}$ defined on $X$ as
$$
\theta_{\xi,\eta}(\zeta)=\xi\langle \eta,\zeta\rangle_A \; \; \text{for} \; \xi, \eta, \zeta\in X,
$$
is adjointable and we have $\theta_{\xi,\eta}^*=\theta_{\eta,\xi}$. Then
$\K(X)=\clsp\{\theta_{\xi, \eta}\mid \xi, \eta\in X\}$ is the ideal of compact operators in $\LL(X)$. 

Suppose $X$ is a right Hilbert $A$-module. A $*$-homomorphism 
$\varphi:A\rightarrow\LL(X)$ induces a left action of $A$
on a $X$ by $a\xi:=\varphi(a)\xi$, for $a\in A$ and $\xi\in X$.
Then $X$ becomes a right Hilbert $A$--$A$-bimodule (or 
$C^*$-correspondence over $A$). The standard bimodule $_AA_A$ is
equipped with $\langle a, b\rangle _A=a^*b$, and the right and left actions are simply given
by right and left multiplication in $A$, respectively.

For right Hilbert $A$--$A$-bimodules $X$ and $Y$, the balanced tensor product $X\otimes_AY$ becomes
a right Hilbert $A$--$A$-bimodule with the right action from $Y$,  the left action implemented by the
homomorphism $A\ni a \mapsto\varphi(a)\otimes_A \id_Y\in\LL(X\otimes_A Y)$, and the $A$-valued
inner product given by 
$$
\langle\xi_1\otimes_A\eta_1 , \xi_2\otimes_A\eta_2\rangle_A=
\langle \eta_1, \langle\xi_1,\xi_2\rangle_A\cdot\eta_2\rangle_A,
$$ 
for $\xi_i\in X$ and $\eta_i\in Y$, $i=1,2$.

Let $P$ be a multiplicative semigroup with identity $e$, and let $A$ be a $C^*$-algebra.
For each $p\in P$ let $X_p$ be a complex vector space.
Then the disjoint union $X := \bigsqcup_{p\in P}X_{p}$ is a {\em product system} over $P$
if the following conditions hold:
\begin{description}
\item{(PS1)} For each $p\in P\setminus\{e\}$, $X_p$ is a right Hilbert $A$--$A$-bimodule.
\item{(PS2)} $X_e$ is the standard bimodule $_AA_A$.
\item{(PS3)} $X$ is a semigroup such that $\xi\eta\in X_{pq}$ for $\xi\in X_p$ and $\eta\in X_q$,
and for $p, q\in P\setminus\{e\}$, this product extends to an isomorphism
$F^{p, q} : X_p\otimes_A X_q\rightarrow X_{pq}$ of right Hilbert $A$--$A$-bimodules. 
If $p$ or $q$ equals $e$ then the corresponding product in $X$ is induced by the left 
or the right action of $A$, respectively.
\end{description}

\begin{remark}\rm
For $p\in P$, there are maps
$F^{p, e}:X_p\otimes_A X_e\rightarrow X_p$ and $F^{e, p}:X_e\otimes_A X_p\rightarrow X_p$
by multiplication, ie $F^{p, e}(\xi\otimes a)=\xi\, a$ and $F^{e, p}(a\otimes\xi)=a\, \xi$
for $a\in A$ and $\xi\in X_p$. Note that $F^{p,e}$ is always isomorphism. However, $F^{e,p}$ is
isomorphism only if $\overline{\varphi(A)X_p}=X_p$ or, in the terminology from \cite{F}, if $X_p$ is
essential.
\end{remark}

For each $p\in P$, we denote by $\langle\cdot,\cdot\rangle_p$ the $A$-valued
inner product on $X_p$  and by $\varphi_p$ the $*$-homomorphism from $A$ into $\LL(X_p)$.
Due to associativity of the multiplication on $X$, we have
$\varphi_{pq}(a)(\xi\eta)=(\varphi_p(a)\xi)\eta$ for all $\xi\in X_p$, $\eta\in X_q$, and $a\in A$.

 For each pair $p,q\in P\setminus\{e\}$,
the isomorphism $F^{p,q} : X_p\otimes_A X_q\rightarrow X_{pq}$
allows us to define a $*$-homomorphism $i_p^{pq}:\LL(X_{p})\to\LL(X_{pq})$ by
$i_p^{pq}(S)=F^{p,q}(S\otimes_A I_q)(F^{p,q})^*$ for $S\in\LL(X_p)$.
In the case $r\neq pq$ we define $i_p^r:\LL(X_p)\to\LL(X_r)$ to
be the zero map $i_p^r(S)=0$ for all $S\in\LL(X_p)$. Further, we let $i_e^q=\varphi_q$.

\smallskip
Let $X=\sqcup_{p\in P}X_p$  be a product system over $P$ of right Hilbert $A$--$A$-bimodules. 
A map $\psi$ from $X$ to a  $C^*$-algebra $C$ is a {\em Toeplitz
representation} of $X$ if the following conditions hold:
\begin{description}
\item{(T1)} for each $p\in P\setminus\{e\}$, $\psi_p:=\psi\vert_{X_p}$ is linear,
\item{(T2)} $\psi_e:A\longrightarrow C$ is a $*$-homomorphism,
\item{(T3)} $\psi_p(\xi)\psi_q(\eta)=\psi_{pq}(\xi\eta)\; \; $ for $ \; \xi\in X_p$, $\eta\in X_q$, $p,q\in P$,
\item{(T4)} $\psi_p(\xi)^*\psi_p(\eta)=\psi_e(\langle\xi, \eta\rangle_p)$ for $\xi, \eta\in X_p$.
\end{description}
As shown in \cite{P}, for each $p\in P$ there exists a $*$-homomorphism
$\psi^{(p)} : \K(X_p) \longrightarrow C$ such that $\psi^{(p)}(\theta_{\xi,\eta})=
\psi_p(\xi)\psi_p(\eta)^*\, , \; \text{for}\; \xi,\eta\in X_p$. A Toeplitz representation $\psi$ is
{\it Cuntz-Pimsner covariant}, \cite{F}, if 
\begin{description}
\item{(CP)} $\psi^{(p)}(\varphi_p(a))=\psi_e(a)$ for $a\in\varphi_p^{-1}(\K(X_p))$  and all $p\in P$. 
\end{description}

The Toeplitz algebra $\T(X)$ associated to the product system $X$ was defined by Fowler as  the universal
$C^*$-algebra for Toeplitz representations, \cite{F}. The Cuntz-Pimsner algebra $\OO(X)$ is 
universal for the Cuntz-Pimsner covariant Toeplitz representations. A number of other related constructions 
exist in the literature, we do not discuss in here. However, we would like to mention co-universal 
algebras studied by Carlsen, Larsen, Sims and Vittadello in \cite{CLSV}, and reduced Cuntz-Pimsner 
algebras investigated by Kwa\'{s}niewski and Szyma\'{n}ski in \cite{KS}.


\section{A cohomology for product systems}\label{cohodefi}

Let $X$ be a product system of Hilbert bimodules over a semigroup $P$  and with the coefficient 
(unital) $C^*$-algebra $A$. Then the direct sum of $A-A$-bimodules 
\begin{equation}\label{r}
\Rf := \bigoplus_{p\in P} X_p
\end{equation}
becomes a ring graded over $P$ with the multiplication borrowed from $X$. We assume that there exists 
a unital $A$-bimodule map $\Psi:\Rf\to A$ such that 
\begin{equation}\label{psiidentity}
\Psi(xy) = \Psi(x\Psi(y)), 
\end{equation}
for all $x,y\in\Rf$. Then $A=X_e$ becomes a left $\Rf$-module, with the $\Rf$-action $\rh$ given 
by the composition of the multiplication in $\Rf$ with $\Psi$, i.e. 
\begin{equation}\label{action}
x\rh a := \Psi(xa), 
\end{equation}
for $x\in\Rf$, $a\in A$. We denote this module $\Mf$ and define the $n^{\rm th}$-cohomology group 
of the product system $X$ relative to $\Psi$ as 
\begin{equation}\label{hnext}
H_\Psi^n(X) := \ext_\Rf^n(\Mf,\Mf), 
\end{equation}
cf. \cite{H}, \cite{B}.

\begin{example}\label{cohgraphs}\rm
Let $E$ be a {\em finite} directed graph, with vertices $E^0$, edges $E^1$, and range and source 
mappings $r:E^1\to E^0$ and $s:E^1\to E^0$, respectively. Let $X_1$ be the standard Hilbert bimodule 
associated with $E$, \cite{K}, with the finite-dimensional coefficient algebra $A$ generated by vertex projections. 
Let $X$ be the product system over the additive semigroup $\Nb$ generated by $X_1$. 
For each $n\in\Nb$, $X_n$ is the $\Cb$-span of paths of length $n$ (paths of length zero being vertices). Multiplication in ring $\Rf$ is simply given by concatenation of directed paths. For a path $\mu$ we set 
$\Psi(\mu):=s(\mu)$. Then for a path $\mu$ and a vertex $v$ we have $\mu \rightharpoonup v = s(\mu)$ if 
$v=r(\mu)$, and $0$ otherwise. 
\end{example}

\begin{example}\label{groupcoho}\rm
Let $G$ be a countable group. We set $P=G$ and $X_g=\Cb g$ for all $g\in G$. 
Then  $X$ is a product system with the usual group algebra multiplication and the inner products 
$\la zg, wg \ra_g = \overline{z}w1$, for $g\in G$ and $z,w\in\Cb$. 
We have $\Rf=\Cb G$, the usual complex group algebra. In this case, $\Psi$ is the 
trivial representation of $\Cb G$ and $\Mf$ is the trivial module. 
\end{example}

\begin{example}\label{cohcuntz}\rm
Here we consider a product system studied in \cite{L} in connection with Exel's approach to semigroup 
crossed products via transfer operator,  \cite{E}, and in \cite{Y} and \cite{HLS1} in connection with Cuntz's 
algebra $\Q_\Nb$, \cite{C}. The product system $X$ is over the multiplicative semigroup $\Nb^\times$. 
The coefficient algebra $A$ is $C(\Tb)$, 
and each fiber $X_p$ is a free left $A$-module of rank one with a basis vector $\mathbbm{1}_p$. The right 
action of $A$ is determined by $\mathbbm{1}_p a=\alpha_p(a)\mathbbm{1}_p$, where $\alpha_p:A\to A$ 
is an endomorphism such that $\alpha_p(a)(z)=a(z^p)$ for $a\in A$ and $z\in\Tb$. The inner product in 
fiber $X_p$ is given by $\la a\mathbbm{1}_p, b\mathbbm{1}_p \ra_p = L_p(a^*b)$, where $L_p:A\to A$ is 
a transfer operator for $\alpha_p$ such that $L_p(a)(z)=\frac{1}{p}\sum_{w^p=z}a(w)$. Fibers are multiplied 
according to the rule $(a\mathbbm{1}_p)(b\mathbbm{1}_q)=a\alpha_p(b)\mathbbm{1}_{pq}$.  

It is shown in 
\cite[Lemma 3.1]{HLS1} that the left action of $A$ on each fiber is by compact operators.  In fact, 
this product system belongs to the class of singly generated product systems of finite type, as introduced 
in \cite[Definition 3.5]{HLS2}. 

We set $\Psi(a\mathbbm{1}_p):=a$, for $p\in\Nb^\times$ and $a\in A$. Then the action of $\Rf$ on $\Mf$ is determined by $\mathbbm{1}_p\rightharpoonup a=\alpha_p(a)$, for $p\in\Nb^\times$ and $a\in\Mf$. 
\end{example}


\section{Irreversible algebraic dynamics}

In this section, we consider irreversible dynamical systems corresponding to injective homomorphisms 
of abelian groups. We follow the approach of Stammeier, \cite{St} (see also \cite{BLS}), building on the 
works of Exel and Vershik, \cite{EV}, Cuntz and Vershik, \cite{CV}, and Carlsen and Silvestrov, \cite{CaSil}. 
In particular, we use the description of the product systems naturally arising from such dynamics, 
due to Stammeier, \cite{Storal}. 

Let $G$ be a countable abelian group, and let $P$ be a semigroup with identity $e$.  
Let $\theta$ be an action of $P$ on $G$ by injective group homomorphisms.  
We denote by $A:=C^*(G)$ the group $C^*$-algebra of $G$. For each $p\in P$ let 
$X_p$ be a free left $A$-module of rank one with a basis element $\mathbbm{1}_p$. The right 
action of $A$ on $X_p$  is determined by $\mathbbm{1}_p a=\theta_p(a)\mathbbm{1}_p$, $a\in A$. 
The inner product in $X_p$ is defined as 
\begin{equation}\label{condexpsubgroup}
\la a\mathbbm{1}_p, b\mathbbm{1}_p\ra_p := \theta_p^{-1}E_p(a^*b), 
\end{equation}
for $p\in\Nb^\times$ and $a,b\in A$. Here $E_p:C^*(G)\to C^*(\theta_p(G))$ is the conditional 
expectation given by restriction. For $a=g$ and $b=h$ with $g,h\in G$, this yields 
\begin{equation}\label{Stinnerpr}
\la g\mathbbm{1}_p, h\mathbbm{1}_p\ra_p = \left\{ \hspace{-2mm}\begin{array}{ll} 
{\theta_p^{-1}(g^{-1}h)}    & \text{if } g^{-1}h\in\theta_p(G), \\ 0 & 
\text{otherwise}. \end{array} \right. 
\end{equation}
If index $[G:\theta_p(G)]$ is finite, then in the dual picture, with $\hat{\theta}_p$ 
acting on $C(\widehat{G})$, this inner product corresponds to the transfer operator given by averaging 
over the finitely many inverse image points, \cite{Storal}. Finally, fibers are multiplied according to the rule 
\begin{equation}\label{fibermulti}
(a\mathbbm{1}_p)(b\mathbbm{1}_q)=a\theta_p(b)\mathbbm{1}_{pq}.
\end{equation}  

In this case, ring $\Rf$ is the skew product $\Zb G\rtimes_\theta P$, with multiplication 
$$ (gp)(hq) = (g\theta_p(h))(pq), $$
$g,h\in G$, $p,q\in P$.  We take $\Psi(g{\mathbbm 1}_p):=g$, 
$g\in G$, $p\in P$. Then the action of $\Rf$ on $\Mf$ is given by 
$$ (g{\mathbbm 1}_p) \rh h = g\theta_p(h), $$
$g,h\in G$, $p\in P$.  Example \ref{cohcuntz} from Section 3 arises as a special case of this construction.   

\smallskip
Now, we describe an acyclic, free resolution of the $\Rf$-module $\Mf$. 
To this end, we define a complex of $\Rf$-modules and maps 
\begin{equation}\label{resolution}
\ldots \overset{\partial_2}{\longrightarrow} \Ff_2
\overset{\partial_1}{\longrightarrow} \Ff_1
\overset{\partial_0}{\longrightarrow} \Ff_0
\overset{\partial_{-1}}{\longrightarrow} \Mf 
\longrightarrow 0,  
\end{equation}
as follows. We let $\Ff_0$ be a free left $\Rf$-module of rank $1$ with a basis element $[\;\;]$. 
For $n\geq 1$, we let $\Ff_n$ be a free left $\Rf$-module with a basis 
\begin{equation}\label{basis} 
\{ [p_1,\ldots,p_n] \mid p_k\in P, k=1,\ldots,n\}. 
\end{equation} 
The maps $\partial_*$ are defined as $\Rf$-module homomorphisms such that
$$ \begin{aligned}
\partial_{-1}([\;\;]) & = 1, \\
\partial_0([p]) & = (\mathbbm{1}_p - 1)[\;\;], 
\end{aligned} $$
and for $n\geq 2$ we set 
$$ \begin{aligned}
\partial_{n-1}( [p_1,\ldots,p_n]) & = \mathbbm{1}_{p_1}[p_2,\ldots,p_n] \\
  & + \sum_{i=1}^{n-1}(-1)^i [p_1,\ldots,p_{i-1}, p_i p_{i+1},p_{i+2},\ldots,p_n] \\
  & + (-1)^n[p_1,\ldots,p_{n-1}].  
\end{aligned} $$
A routine calculation shows that 
$$ \partial_n\partial_{n+1}=0 $$ 
for all $n\geq -1$. 

To show that complex (\ref{resolution}) is acyclic, we construct splitting homotopies.  
That is, we define abelian group homomorphisms  $h_{-1}:\Mf\to\Ff_0$ and $h_n:\Ff_n\to\Ff_{n+1}$, 
$n\geq 0$, such that 
$$ \begin{aligned}
\partial_{-1}h_{-1} & = \id_{\Mf}, \\
\partial_n h_n + h_{n-1}\partial_{n-1} & = \id_{\Ff_n}, \;\;\; \text{for } n\geq 0. 
\end{aligned} $$
For example, we may take 
$$ \begin{aligned}
h_{-1}(a) & = a[\;\;], \\
h_0(a\mathbbm{1}_p[\;\;]) & = a[p], \\
h_n(a\mathbbm{1}_{p_0}[p_1,\ldots,p_n]) & = a[p_0,p_1,\ldots,p_n], \;\;\; n\geq 1, 
\end{aligned} $$
for $a\in C^*(G)$. 

\smallskip
Now, applying the $\hom_{\Rf}(*,\Mf)$ functor to chain complex (\ref{resolution}), with $\Mf$ deleted, 
we obtain the following complex of homogeneous cochains:
\begin{equation}\label{cochains}
0 \longrightarrow \hom_{\Rf}(\Ff_0,\Mf) 
\overset{\partial_0^*}{\longrightarrow} \hom_{\Rf}(\Ff_1,\Mf) 
\overset{\partial_1^*}{\longrightarrow} \ldots
\end{equation}
By definition, we have 
\begin{equation}\label{homogeneouscocycles}
H_\Psi^n(X) = \frac{\ker(\partial_n^*)}{\im(\partial_{n-1}^*)}. 
\end{equation}
Restricting in (\ref{cochains}) elements of  $\hom_{\Rf}(\Ff_n,\Mf)$ to the basis (\ref{basis}) 
of the free $\Rf$-module $\Ff_n$, we obtain the following complex of inhomogeneous cochains:
\begin{equation}\label{inhomogeneous}
0 \longrightarrow C^0(P,\Mf) 
\overset{\partial^0}{\longrightarrow} C^1(P,\Mf) 
\overset{\partial^1}{\longrightarrow} C^2(P,\Mf) 
\overset{\partial^2}{\longrightarrow} \ldots
\end{equation}
Here we denote:
$$ \begin{aligned}
C^0(P,\Mf) & = \Mf, \\
C^n(P,\Mf) & = \{ \xi: \bigtimes^n P \to \Mf \}, \;\;\; n\geq 1. 
\end{aligned} $$
The cochain maps are:
$$ \begin{aligned}
\partial^0(a)(p) & = \theta_p(a) - a, \\
\partial^n(\xi)(p_1,\ldots,p_{n+1}) & = \theta_{p_1}(\xi(p_2,\ldots,p_{n+1})) \\
  & + \sum_{i=1}^n (-1)^i \xi(p_1,\ldots,p_{i-1}, p_i p_{i+1}, p_{i+2},\ldots,p_{n+1}) \\
  & + (-1)^{n+1} \xi(p_1,\ldots,p_n),  
\end{aligned} $$
for $n\geq1$, $a\in \Mf$, $\xi\in C^n(P,\Mf)$, $p$ and $p_1,\ldots,p_{n+1}\in P$. We have 
\begin{equation}\label{inhomogeneouscocycles}
H_\Psi^n(X) \cong \frac{\ker(\partial^n)}{\im(\partial^{n-1})}. 
\end{equation}

Now, let $\xi:P\times P\to A_{\sa}$ be a normalized (i.e. $\xi(p,q)=0$ if $p=1$ or $q=1$) $2$-cocycle with 
self-adjoint values. We define a new product system $X^\xi$ over $P$ and with coefficients in $A$, as follows. 
For each $p\in P$, fiber $X^\xi_p$ coincides with $X_p$ (but we denote the generator by 
${\mathbbm 1}_p^\xi$ to avoid 
confusion). However, the multiplication between fibers is twisted by $\xi$ according to the rule 
\begin{equation}\label{twistedmulti}
(a{\mathbbm 1}_p^\xi)(b{\mathbbm 1}_q^\xi) := \exp(i\xi(p,q))a\theta_p(b){\mathbbm 1}_{pq}^\xi. 
\end{equation}
It is not difficult to verify that $X^\xi$ satisfies  axioms (PS1)--(PS3) of a product system, given in Section 2 above. 
Consequently, the corresponding Toeplitz and Cuntz-Pimsner algebras $\T(X^\xi)$ and $\OO(X^\xi)$, 
respectively, may be considered as $\xi$-twisted versions of $\T(X)$ and $\OO(X)$, respectively. 

\begin{proposition}\label{trivialcocycles}
Let $\xi,\eta$ be normalized, self-adjoint $2$-cocycles such that  $[\xi]=[\eta]$ in $H^2_\Psi(X)$. 
Then the corresponding twisted product systems $X^\xi$ and $X^\eta$ are isomorphic. 
\end{proposition}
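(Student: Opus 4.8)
The plan is to construct an explicit isomorphism of product systems $\Phi=(\Phi_p)_{p\in P}:X^\xi\to X^\eta$ out of a $1$-cochain that trivialises $\xi-\eta$. By \eqref{inhomogeneouscocycles}, the hypothesis $[\xi]=[\eta]$ provides a function $\beta:P\to A$ (an element of $C^1(P,\Mf)$) with
$$ \xi(p,q)-\eta(p,q)=(\partial^1\beta)(p,q)=\theta_p(\beta(q))-\beta(pq)+\beta(p), \qquad p,q\in P. $$
First I would normalise $\beta$ and make it self-adjoint valued: putting $p=e$ in the identity above, using $\theta_e=\id$ and that $\xi-\eta$ is normalised, yields $\beta(e)=0$; and since each $\theta_p$ is a $*$-homomorphism and $\xi-\eta$ takes values in $A_{\sa}$, the cochain $\beta^{*}$ with $\beta^{*}(p):=\beta(p)^{*}$ satisfies $\partial^1\beta^{*}=(\partial^1\beta)^{*}=\partial^1\beta$, so $\beta$ may be replaced by $\tfrac12(\beta+\beta^{*})$. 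From now on $\beta:P\to A_{\sa}$, $\beta(e)=0$, and $u_p:=\exp(i\beta(p))$ is a unitary in $A$ with $u_e=1$.

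Next I would put $\Phi_e=\id_A$ and, for $p\neq e$, define $\Phi_p:X^\xi_p\to X^\eta_p$ by $\Phi_p(a{\mathbbm 1}^\xi_p):=a u_p\,{\mathbbm 1}^\eta_p$ on the rank-one module, extended by continuity. Checking that each $\Phi_p$ is a right Hilbert $A$--$A$-bimodule isomorphism is routine and uses only that $A=C^*(G)$ is commutative and $u_p$ unitary: left $A$-linearity is clear, right $A$-linearity amounts to $\theta_p(b)u_p=u_p\theta_p(b)$, and
$$ \langle a u_p{\mathbbm 1}^\eta_p,\, b u_p{\mathbbm 1}^\eta_p\rangle_p = \theta_p^{-1}E_p(u_p^{*}a^{*}b u_p) = \theta_p^{-1}E_p(a^{*}b) = \langle a{\mathbbm 1}^\xi_p,\, b{\mathbbm 1}^\xi_p\rangle_p , $$
so $\Phi_p$ is isometric and invertible, with inverse $a{\mathbbm 1}^\eta_p\mapsto a u_p^{*}{\mathbbm 1}^\xi_p$.

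The one place the choice of $\beta$ enters is compatibility with the twisted multiplications: I must verify $\Phi_{pq}\big((a{\mathbbm 1}^\xi_p)(b{\mathbbm 1}^\xi_q)\big)=\Phi_p(a{\mathbbm 1}^\xi_p)\,\Phi_q(b{\mathbbm 1}^\xi_q)$. By \eqref{twistedmulti} the left side equals $\exp(i\xi(p,q))\,a\theta_p(b)\,u_{pq}\,{\mathbbm 1}^\eta_{pq}$, while the right side, expanded again via \eqref{twistedmulti} and simplified using commutativity of $A$, equals $\exp(i\eta(p,q))\,a\theta_p(b)\,u_p\theta_p(u_q)\,{\mathbbm 1}^\eta_{pq}$. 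So it remains to show $\exp(i\xi(p,q))\,u_{pq}=\exp(i\eta(p,q))\,u_p\,\theta_p(u_q)$; all factors lie in the commutative algebra $A$ and $\theta_p$ is a $*$-homomorphism, so the exponents add and this reads $\exp\!\big(i(\xi(p,q)+\beta(pq))\big)=\exp\!\big(i(\eta(p,q)+\beta(p)+\theta_p(\beta(q)))\big)$, which is immediate from the trivialisation identity for $\beta$. Together with $\Phi_e=\id_A$ this shows that $\Phi$ intertwines $F^{\xi,p,q}$ with $F^{\eta,p,q}$, hence is an isomorphism of product systems.

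The genuinely delicate point is this last reduction of a multiplicative identity among the factors $\exp(i\,(\cdot))$ to the additive cocycle relation; it works precisely because $A=C^*(G)$ is commutative, and indeed the whole device of self-adjoint-valued $2$-cocycles twisted in via $\exp(i\xi(p,q))$ is tailored to exploit this. Everything else — the normalisation and self-adjointness adjustment of $\beta$, and the passage from the algebraic rank-one modules $A{\mathbbm 1}_p$ to their Hilbert-module completions — is routine bookkeeping that should nonetheless be recorded.
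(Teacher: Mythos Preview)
Your proof is correct and follows exactly the approach of the paper: choose a $1$-cochain trivialising $\xi-\eta$, make it self-adjoint, and define the isomorphism fiberwise by multiplication by $\exp(i\beta(p))$. The paper's own proof is a terse three-line version of yours (it writes ``one easily verifies'' where you supply the bimodule and multiplicativity checks), so your proposal is essentially a fleshed-out rendition of the same argument.
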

\begin{proof}
By hypothesis, there is a $\psi:P\to \Mf$ such that $\xi-\eta=\partial^1(\psi)$. Replacing $\psi$ with 
$1/2(\psi+\psi^*)$ if necessary, we may assume that $\psi(p)$ is self-adjoint for all $p\in P$. Define a map 
$X^\xi\to X^\eta$ so that $a{\mathbbm 1}^\xi_p \mapsto \exp(i\psi(p))a{\mathbbm 1}^\eta_p$ for all 
$p\in P$, $a\in A$. One easily verifies that this map yields the required isomorphism between $X^\xi$ and $X^\eta$. 
\end{proof}


\section{Twisted ${\mathcal Q}_{\Nb}$} 

In this section, we apply the twisting procedure described in Section 4 to the product system $X$ discussed 
in Example \ref{cohcuntz} from Section 3. We begin by having a quick look at the $0-$, 
$1-$ and $2-$cohomology groups. The $0$-cohomology is clear. Indeed, it follows from 
(\ref{inhomogeneouscocycles}) that we simply have 
$$ \begin{aligned} 
H^0_\Psi(X) & = \{ a\in A \mid \alpha_p(a)=a, \forall p\in\Nb^\times \} \\
 & = \Cb 1.
\end{aligned} $$
Now, let $\xi(1)=0$ and for each {\em prime} $p\in\Nb^\times$ let $\xi(p)\in A$ be arbitrary. 
Let $1\neq q\in\Nb^\times$ have prime factorization $q=p_1\cdots p_m$, with 
$p_1\leq p_2\leq\ldots\leq p_m$. Proceeding by induction on $m$, define 
$\xi(q):=\alpha_{q/p_m}(\xi(p_m)) + \xi(q/p_m)$. Then $\xi:q\mapsto \xi(q)$, $q\in\Nb^\times$, is a $1$-cocycle. 
For $\xi$ to be a $1$-coboundary, there must exist a function $\psi\in C(\Tb)$ such that for 
all prime $p\in\Nb^\times$ and all $z\in\Tb$ we have 
$$ \psi(z) = \psi(z^p) - \xi(p)(z). $$
To construct such a $\psi$, fix a prime $p$ for a moment and define $\psi(z)$ for $z\in \Tb$ such that 
$z^{p^k}=1$, by induction on $k$, as follows. 
$$ \begin{aligned}
\psi(1) &: = 0, \\
\psi(z) & := \psi(z^p) - \xi(p)(z). 
\end{aligned} $$ 
In this way, $\psi$ is densely defined on $\Tb$ at all roots of unity. It follows that $\xi$ is a $1$-cocycle if and 
only if $\psi$ can be extended to a continuous function on the entire cirle $\Tb$. 

\smallskip
For a $2$-cocycle $\xi:\Nb^\times \times \Nb^\times \to A$, 
suppose $\psi:\Nb^\times \to A$ is such that $\xi=\partial^1(\psi)$. Then for any two primes 
$p,q$ we must have 
$$ \begin{aligned}
\psi(pq) & = \alpha_p(\psi(q)) + \psi(p) - \xi(p,q) \\
 & = \alpha_q(\psi(p)) + \psi(q) - \xi(q,p), 
\end{aligned} $$
and hence 
\begin{equation}\label{pq2coboundary}
(\psi(q)(z^p) - \psi(q)(z)) - (\psi(p)(z^q) - \psi(p)(z)) = \xi(p,q) - \xi(q,p) 
\end{equation}
for all $z\in\Tb$. Thus, for $\xi$ to give a non-zero element in $H^2_\Psi(X)$, it suffices to have  
$\xi(p,q)(1) \neq \xi(q,p)(1)$ for some primes $p$ and $q$. For a more specific example,  
let $\xi:\Nb^\times \times \Nb^\times \to \Cb 1$ be a map such that 
\begin{equation}\label{bicharacter}
\xi(mn,k)=\xi(m,k)+\xi(n,k) \;\;\; \text{and} \;\;\; \xi(m,nk)=\xi(m,n)+\xi(m,k). 
\end{equation}
Then $\xi$ is a $2$-cocycle. For example, given two distinct primes $p$ and $q$ and complex numbers 
$a,b,c,d$, we can set 
\begin{equation}\label{pqbicharacter}
\xi(mp^kq^l,np^rq^j) := (ak+bl)(cr+dj), 
\end{equation}
with $m,n$ relatively prime with both $p$ and $q$. By the above, if $ad\neq bc$ then $\xi$ is not 
a coboundary. 

\smallskip
Let $\xi:\Nb^\times \times \Nb^\times \to \Rb1$ be a $2$-cocycle defined in (\ref{pqbicharacter}), with 
$a,b,c,d$ real numbers. We denote by $u$ the standard unitary generator of $A=C(\Tb)$ and for $m\in\Nb^\times$ we denote by $s_m$ the canonical image of ${\mathbbm 1}_m^\xi$ in 
${\mathcal Q}_\Nb^\xi:=\OO(X^\xi)$. (Of course, $s_m$ depends also on $\xi$. We do not indicate this 
explicitly to lighten the notation.)  
Similarly to \cite{C} and \cite{HLS1}, each $s_m$ is an isometry 
and the following relations hold:
\begin{description}
\item{(QX1)} $s_m s_n= e^{i(ak+bl)(cr+dj)}s_{mn}$, 
\item{(QX2)} $s_mu^l=u^{ml}s_m$, for all $l\in\Zb$, 
\item{(QX3)} $\displaystyle{\sum_{k=0}^{m-1}u^ks_ms_m^*u^{-k}=1,}$
\end{description}
where $k,r$ are the numbers of $p$-factors of $m$ and $n$, respectively, and $l,j$ are the 
numbers of $q$-factors of $m$ and $n$, respectively. 

\begin{proposition}\label{simplicity}
$C^*$-algebra ${\mathcal Q}_\Nb^\xi$ is simple. 
\end{proposition}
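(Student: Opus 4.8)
The plan is to mimic the standard argument that Cuntz's algebra $\mathcal{Q}_\Nb$ is simple, adapting it to the twisted relations (QX1)--(QX3). The strategy is: (i) identify a canonical conditional expectation onto a commutative ``diagonal'' subalgebra, (ii) show that subalgebra, with its natural dynamics coming from the $s_m$'s, has no invariant ideals, and (iii) conclude via a standard uniqueness/faithfulness argument. I would start by noting that (QX1)--(QX3) imply $\mathcal{Q}_\Nb^\xi$ is generated by the unitary $u$ together with the isometries $s_p$ for $p$ prime, since every $s_m$ is, up to a scalar $e^{i(ak+bl)(cr+dj)}\in\Tb$, a product of the $s_p$. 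Crucially, the twisting scalars lie in $\Tb 1$, so they are central; hence the twist does not affect any argument that only tracks ideals, ranges of projections, or the $C^*$-algebra generated by a given set. In particular $u$ still normalizes each $s_m s_m^*$ as in (QX2)--(QX3), and the projections $e_{m,k}:=u^k s_m s_m^* u^{-k}$, $0\le k<m$, form a partition of unity exactly as in the untwisted case.

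Next I would set up the core of the argument, the ``Cuntz--Takesaki'' type computation. Consider the fixed-point algebra $\mathcal{F}$ of the gauge-type action, or more concretely the closure of the $*$-algebra generated by elements $s_m w s_m^*$ with $w\in A=C(\Tb)$; this is an AF-like (in fact a Bunce--Deddens / UHF-flavoured) commutative-by-AF algebra whose structure is identical to that in the untwisted $\mathcal{Q}_\Nb$ because the scalars cancel in $s_m(\cdots)s_m^*$ expressions. There is a faithful conditional expectation $E$ from $\mathcal{Q}_\Nb^\xi$ onto (a suitable commutative subalgebra $\mathcal D$ inside) this fixed-point algebra, obtained by averaging over the dual actions (the circle action $s_m\mapsto z^{?}s_m$ and the rotation action on $A$). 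Given a nonzero positive $x$ in an ideal $I$, I would use the relations to ``compress'' $x$: multiply by suitable partial isometries $u^k s_m$ and their adjoints to push $E(x)$ down to a nonzero multiple of a projection $p\in\mathcal D$ (here one uses that $\mathcal D$ is generated by the $e_{m,k}$, together with spectral projections of $u$, all of which are equivalent to smaller and smaller pieces via the $s_m$). The point is that the $s_m$ act transitively enough on the spectrum of $\mathcal D$ (which is a solenoid-type space) that the only closed two-sided ideal of $\mathcal D$ invariant under all these compressions is $\{0\}$ or everything; so $p$ generates a full ideal, forcing $1\in I$.

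The one subtlety the twist does introduce, and the step I expect to be the main obstacle, is making precise the ``conditional expectation onto the core'' in the twisted setting — i.e. checking that the gauge action used to define $E$ still exists as a (strongly continuous) $\Tb$-action (or $\widehat P$-action) on $\mathcal{Q}_\Nb^\xi$. This needs a small argument: because the twisting cocycle $\xi$ takes values in $\Rb 1\subset A$ and is a genuine $2$-cocycle, the relations (QX1)--(QX3) are invariant under rescaling each generator $s_m\mapsto \lambda_m s_m$ provided $m\mapsto\lambda_m$ is a character of $(\Nb^\times,\cdot)$; one should verify (using the cocycle identity) that the map $s_m\mapsto\lambda_m s_m$, $u\mapsto u$ extends to an automorphism, so the universal property of $\OO(X^\xi)$ furnishes the needed action and hence $E$. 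Once $E$ is in hand and is faithful on positive elements, the ideal-compression argument above goes through verbatim as in \cite{C} and \cite[proof of simplicity]{HLS1}, and one concludes that $\mathcal{Q}_\Nb^\xi$ has no nontrivial closed two-sided ideals. (In the next proposition, purely infinite simplicity would then follow by exhibiting, for a suitable compression of any nonzero positive element, an isometry in $\mathcal{Q}_\Nb^\xi$ with range projection dominated by it — again the twisting scalars are irrelevant since they are central unitaries.)
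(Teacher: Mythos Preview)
Your approach is correct but differs from the paper's. The paper does not give a direct argument: it simply invokes \cite[Theorem~5.10]{KS}, a general simplicity criterion for Cuntz--Pimsner algebras of product systems over Ore semigroups, which requires checking \emph{minimality} and \emph{topological aperiodicity} of $X^\xi$; since the cocycle is scalar-valued, these properties are inherited verbatim from the untwisted $X$, and the paper refers to \cite[\S6.5]{KS} for the details. Your route---gauge action, faithful conditional expectation onto the core, then compression---is the classical Cuntz argument, and in fact it is precisely the machinery the paper sets up \emph{after} Proposition~\ref{simplicity} in order to prove pure infiniteness (Theorem~\ref{purelyinfinite}): the expectations $E:\Q_\Nb^\xi\to\F_\Nb^\xi$ and $F:\F_\Nb^\xi\to\D_\Nb^\xi$ are constructed there (faithfulness of $E$ coming from amenability of $\Qb_+^\times$ and the identification of $\OO(X^\xi)$ with the reduced algebra), the observation that $\F_\Nb^\xi$ is unaffected by the twist is made explicitly, and Lemma~\ref{cuntzlemma} supplies the compression step you describe informally. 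So your argument would work and is more self-contained; the paper's citation of \cite{KS} is shorter but outsources the real content. One small correction: your description of $\D$ should not include ``spectral projections of $u$''---the diagonal $\D_\Nb^\xi$ is just $\clsp\{u^ks_ms_m^*u^{-k}\}$, while $u$ itself lives in $\F_\Nb^\xi\setminus\D_\Nb^\xi$.
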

A proof of simplicity of ${\mathcal Q}_\Nb^\xi$, claimed in Proposition \ref{simplicity} above, may be given 
as an application of \cite[Theorem 5.10]{KS}. This requires 
showing minimality and topological aperiodicity (in the sense of Definition 5.7 and Definition 5.3 of \cite{KS}, 
respectively) of the underlying product system $X^\xi$. Since both proofs are essentially the same as 
those from \cite[Section 6.5]{KS} (treating the case of untwisted ${\mathcal Q}_\Nb$), we omit the details.  

\smallskip
We want to investigate the structure of $C^*$-algebra $\Q_\Nb^\xi$ a little bit further. To this end, we note that 
$X^\xi$ is a {\em regular} product system (i.e. the left action $\varphi_m$ on each fiber $X_m^\xi$ is injective 
and by compacts, see \cite[Definition 3.1]{KS}) over an Ore semigroup $\Nb^\times$. Thus, it follows 
from a very general argument, \cite[Lemma 3.7]{KS}, that 
$$ \Q_\Nb^\xi=\clsp\{as_ms_n^*b \mid m,n\in\Nb^\times, a,b\in A\}. $$ 
Furthermore, 
$$ \F_\Nb^\xi := \clsp\{as_ms_m^*b \mid m\in\Nb^\times, a,b\in A\} $$
is a unital $*$-subalgebra of $\Q_\Nb^\xi$. Since the $\xi$-twist does not affect $\F_\Nb^\xi$, this algebra 
is unchanged by introduction of the cocycle. In fact, as shown by Cuntz in \cite[Section 3]{C}, it is a simple 
Bunce-Deddens algebra with a unique trace, \cite{BD,D}. 

In the present situation, since the enveloping group $\Qb_+^\times$ of $\Nb^\times$ is amenable, 
$\Q_\Nb^\xi=\OO(X^\xi)$ coincides with the reduced algebra $\OO((X^\xi)^r)$, \cite{Exame} and 
\cite{KS}, and with the co-universal algebra $\N\OO_{X^\xi}^r$, \cite{CLSV}. Thus, there exists 
a faithful conditional expectation $E:\Q_\Nb^\xi\to\F_\Nb^\xi$ onto $\F_\Nb^\xi$ such that for all 
$m,n\in\Nb^\times$, $a,b\in A$ we have 
$$ E(as_ms_n^*b)=0 \text{ if } m\neq n. $$

Let $\D_\Nb^\xi$ be the $C^*$-subalgebra of $\F_\Nb^\xi$ generted by all projections $u^ks_ms_m^*u^{-k}$, 
that is 
$$ \D_\Nb^\xi := \clsp\{u^ks_ms_m^*u^{-k} \mid m\in\Nb^\times, k\in\Zb\}. $$
Then, as in \cite[Section 3]{C}, $\D_\Nb^\xi$ is commutative and there exists a faithful conditional 
expectation $F:\F_\Nb^\xi\to\D_\Nb^\xi$ onto $\D_\Nb^\xi$ such that for all $m\in\Nb^\times$, $k,l\in\Zb$ 
we have 
$$ F(u^ks_ms_m^*u^{-l})=0 \text{ if } k\neq l. $$
The composition $G:=F\circ E$ yields a 
faithful conditional expectation from $\Q_\Nb^\xi$ onto $\D_\Nb^\xi$. We also recall from 
\cite[Lemma 3.2(a)]{C}, that for all $k\in\Zb$ and $m,n\in\Nb^\times$, we have 
\begin{equation}\label{cuntzequation}
u^ks_ms_m^*u^{-k} = \sum_{j=0}^{n-1}u^{k+jm}s_{mn}s_{mn}^*u^{-k-jm}. 
\end{equation}
One immediate consequence of this identity is that 
\begin{equation}\label{zeroequ}
s_r^*u^ts_r=0\; \text{ unless $t$ is divisible by $r$}. 
\end{equation}
Another one is the identity
\begin{equation}\label{lcm}
s_ms_m^*s_ns_n^* = s_{m\vee n}s_{m \vee n}^*, 
\end{equation}
where symbol $\vee$ denotes the least common multiple of two positive integers. 

\begin{lemma}\label{projectioninequality}
Let $k,l\in\Zb$ and $m,n\in\Nb^\times$. Then 
$$ u^ks_ms_m^*u^{-k} \leq u^ls_ns_n^*u^{-l} $$ 
if and only if both $m$ and $k-l$ are divisible by $n$. 
\end{lemma}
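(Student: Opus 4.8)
The plan is to reduce the comparison of the two projections to a comparison of residue classes modulo $mn$, using identity (\ref{cuntzequation}) as the main tool. Write $q_i := u^i s_{mn}s_{mn}^*u^{-i}$ for $i\in\Zb$. First I would record two elementary facts about this family. By (QX2) (with the isometry $s_{mn}$ and exponent $1$) we have $u^{mn}s_{mn}=s_{mn}u$, hence $u^{mn}s_{mn}s_{mn}^*u^{-mn}=s_{mn}s_{mn}^*$, so $q_{i+mn}=q_i$; thus $q_i$ depends only on $i\bmod mn$. Moreover $q_iq_{i'}=u^is_{mn}\bigl(s_{mn}^*u^{i'-i}s_{mn}\bigr)s_{mn}^*u^{-i'}=0$ whenever $i\not\equiv i'\pmod{mn}$, by (\ref{zeroequ}). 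Hence $\{q_i : 0\le i<mn\}$ is a family of $mn$ mutually orthogonal projections, and consequently, for any index sets $I,J\subseteq\Zb/mn\Zb$, one has $\sum_{i\in I}q_i\le\sum_{j\in J}q_j$ if and only if $I\subseteq J$ (for $I\subseteq J$ the difference is $\sum_{j\in J\setminus I}q_j\ge0$; if some $i_0\in I\setminus J$, testing on a unit vector in the range of $q_{i_0}$ gives a negative value).

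Next I would expand both sides of the asserted inequality via (\ref{cuntzequation}). Taking the ``$n$'' in (\ref{cuntzequation}) to be the present $n$ gives $u^ks_ms_m^*u^{-k}=\sum_{j=0}^{n-1}q_{k+jm}$, and the exponents $k+jm$, $j=0,\dots,n-1$, are pairwise distinct modulo $mn$ and run through exactly the residues $\equiv k\pmod m$; so $u^ks_ms_m^*u^{-k}=\sum_{i\in I}q_i$ with $I=\{\,i\in\Zb/mn\Zb : i\equiv k\ (\mathrm{mod}\ m)\,\}$. Applying (\ref{cuntzequation}) with the roles of $m$ and $n$ interchanged and $k$ replaced by $l$ gives, since $nm=mn$, $u^ls_ns_n^*u^{-l}=\sum_{j\in J}q_j$ with $J=\{\,j\in\Zb/mn\Zb : j\equiv l\ (\mathrm{mod}\ n)\,\}$.

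It then remains to combine the two paragraphs: $u^ks_ms_m^*u^{-k}\le u^ls_ns_n^*u^{-l}$ holds if and only if $I\subseteq J$, i.e. if and only if every integer $x$ with $x\equiv k\pmod m$ also satisfies $x\equiv l\pmod n$. Taking $x=k$ forces $n\mid(k-l)$, and taking $x=k+m$ then forces $n\mid m$; conversely, if $n\mid m$ and $n\mid(k-l)$, then $x\equiv k\pmod m$ implies $n\mid(x-k)$, whence $x\equiv k\equiv l\pmod n$, so $I\subseteq J$. This completes the argument. The only step that requires a little care is the bookkeeping in the first paragraph — verifying that the $q_i$ genuinely form an orthogonal family of $mn$ projections and identifying the relevant index sets — after which the conclusion is a one-line congruence count; there is no substantial obstacle beyond that.
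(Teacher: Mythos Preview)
Your proof is correct and follows essentially the same approach as the paper's: both expand the two projections via identity~(\ref{cuntzequation}) into sums of the mutually orthogonal projections $u^is_{mn}s_{mn}^*u^{-i}$ indexed by $\Zb/mn\Zb$, and then reduce the projection inequality to an inclusion of residue classes. Your version is more explicit about why the $q_i$ form an orthogonal family and about the final congruence count, whereas the paper simply states the reduction and declares the remainder clear; the substance is the same.
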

\begin{proof}
By (\ref{cuntzequation}), we have 
$$ \begin{aligned}
u^ks_ms_m^*u^{-k} & = \sum_{j=0}^{n-1}u^{k+jm}s_{mn}s_{mn}^*u^{-k-jm}, \\
u^ls_ns_n^*u^{-l} & = \sum_{j=0}^{m-1}u^{l+jn}s_{mn}s_{mn}^*u^{-l-jn}. 
\end{aligned} $$
Thus, $ u^ks_ms_m^*u^{-k} \leq u^ls_ns_n^*u^{-l} $ if and only if for each $j\in\{0,\ldots,n-1\}$ 
there is a $j'\in\{0,\ldots,m-1\}$ such that $k+jm=l+j'n$ in $\Zb_{mn}$. This clearly implies the claim. 
\end{proof}

Now, we will show that $C^*$-algebra $\Q_\Nb^\xi$ is purely infinite, 
as in the untwisted case, \cite[Theorem 3.4]{C}. 
Our proof immitates the classical argument of Cuntz, \cite{Co2}, employed also in \cite[Theorem 2.6]{CV}, 
and relies on the following technical lemma. 

\begin{lemma}\label{cuntzlemma}
Let $Q$ be a non-zero projection in $\D_\Nb^\xi$, and let $k_0,l_0\in\Zb$, $m_0,n_0\in\Nb^\times$ be 
such that either $k_0\neq l_0$ or $m_0\neq n_0$. Then there exist $k\in\Zb$ and $m\in\Nb^\times$ 
such that
\begin{description}
\item{(i)} $u^ks_ms_m^*u^{-k} \leq Q$, and 
\item{(ii)} $(u^ks_ms_m^*u^{-k})(u^{k_0}s_{m_0}s_{n_0}^*u^{-l_0})(u^ks_ms_m^*u^{-k}) =0$. 
\end{description}
\end{lemma}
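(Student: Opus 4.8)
The plan is to exploit the structure of $\D_\Nb^\xi$ as a commutative algebra whose nonzero projections dominate arbitrarily ``small'' projections of the form $u^ks_ms_m^*u^{-k}$, together with the orthogonality criterion furnished by Lemma \ref{projectioninequality}. First I would recall that $\D_\Nb^\xi$ is the closed span of the commuting projections $u^ks_ms_m^*u^{-k}$, and that by (\ref{cuntzequation}) each such projection can be refined: $u^ks_ms_m^*u^{-k}=\sum_{j=0}^{n-1}u^{k+jm}s_{mn}s_{mn}^*u^{-k-jm}$. Hence a nonzero projection $Q\in\D_\Nb^\xi$ must dominate some projection $u^{k_1}s_{m_1}s_{m_1}^*u^{-k_1}$; indeed, approximating $Q$ in norm by a finite linear combination of the generators and using that the generators, under the partial order, form a directed system refining to a ``tree'', one finds that the support projection of $Q$ contains at least one minimal-level generator, and then by further refinement via (\ref{cuntzequation}) we can assume $m_1$ is as divisible as we please. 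This gives us a foothold: condition (i) will be arranged by taking $u^ks_ms_m^*u^{-k}\leq u^{k_1}s_{m_1}s_{m_1}^*u^{-k_1}\leq Q$, which by Lemma \ref{projectioninequality} just requires $m_1\mid m$ and $m_1\mid k-k_1$.

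Next I would turn to condition (ii). The product $(u^ks_ms_m^*u^{-k})(u^{k_0}s_{m_0}s_{n_0}^*u^{-l_0})(u^ks_ms_m^*u^{-k})$ vanishes precisely when the ``source'' projection $u^ks_ms_m^*u^{-k}$ is orthogonal to $u^{k_0}s_{m_0}s_{m_0}^*u^{-k_0}$ or to $u^{l_0}s_{n_0}s_{n_0}^*u^{-l_0}$, or when the middle partial isometry $u^{k_0}s_{m_0}s_{n_0}^*u^{-l_0}$ shifts the range of $u^ks_ms_m^*u^{-k}$ off its domain. More carefully: using (QX2) to move the $u$'s past the $s$'s and using (\ref{zeroequ}) $s_r^*u^ts_r=0$ unless $r\mid t$, the compression $p(u^{k_0}s_{m_0}s_{n_0}^*u^{-l_0})p$ for $p=u^ks_ms_m^*u^{-k}$ simplifies, after choosing $m_0\mid m$ and $n_0\mid m$ (which we may since we are free to enlarge $m$), to something proportional to $u^{k}s_m(\,s_{m/m_0}^*u^{k_0-k}\,)(\,u^{l_0-k}s_{m/n_0}\,)s_m^*u^{-k}$ times a scalar phase from the cocycle; by (\ref{zeroequ}) this is zero unless $m/m_0\mid k_0-k$ and $m/n_0\mid l_0-k$, i.e. unless $k\equiv k_0\ (\mathrm{mod}\ m/m_0)$ and $k\equiv l_0\ (\mathrm{mod}\ m/n_0)$. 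So the task reduces to choosing $m$ (a multiple of $\mathrm{lcm}(m_1,m_0,n_0)$) and $k$ (a multiple of... a shift of $k_1$ modulo $m_1$) so that at least one of these congruences \emph{fails}. Since either $m_0\neq n_0$ or $k_0\neq l_0$, one can pick $m$ large enough that $m/m_0\neq m/n_0$ (in the first case) or keep $m/m_0=m/n_0=:d$ large (in the second case) and then choose $k$ in the prescribed residue class mod $m_1$ violating $k\equiv k_0$ or $k\equiv l_0$ mod $d$ — a simple counting argument in $\Zb_{d}$, feasible once $d>1$, which we arrange by taking $m$ a proper multiple of $\mathrm{lcm}(m_1,m_0,n_0)$.

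Concretely, I would organize the argument as follows: (1) produce $u^{k_1}s_{m_1}s_{m_1}^*u^{-k_1}\leq Q$ from the structure of $\D_\Nb^\xi$; (2) set $M:=\mathrm{lcm}(m_1,m_0,n_0)$ and let $m:=M^2$ (or $2M$, whatever gives enough room), noting $m_1\mid m$, $m_0\mid m$, $n_0\mid m$; (3) using (\ref{zeroequ}), (QX2), and the refinement identity (\ref{cuntzequation}), reduce condition (ii) to a pair of congruence conditions on $k$ modulo $m/m_0$ and $m/n_0$; (4) choose $k\equiv k_1\ (\mathrm{mod}\ m_1)$ — ensuring, via Lemma \ref{projectioninequality}, condition (i) — and simultaneously violating one of the two congruences from step (3), which is possible by the Chinese Remainder Theorem together with the hypothesis $(k_0,m_0)\neq(l_0,n_0)$ since the modulus $m/m_0$ or $m/n_0$ is then a proper multiple of the ``constraint modulus'' $m_1$.

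The main obstacle I anticipate is step (3): carefully computing the triple compression $p\,(u^{k_0}s_{m_0}s_{n_0}^*u^{-l_0})\,p$ in terms of the generators of $\D_\Nb^\xi$. One must track the scalar phases $e^{i\xi(\cdot,\cdot)}$ coming from (QX1) — though since these are unimodular they never make a nonzero element zero, so they are harmless for the \emph{vanishing} statement — and one must correctly apply (\ref{zeroequ}) to the ``cross terms'' $s_{m/m_0}^* u^{\bullet} s_{m/n_0}$ that arise after cancelling $s_{m_0}^*s_{m_0}=1$ etc. A clean way to handle this is to first prove the auxiliary identity that $s_m^* u^k s_n = 0$ unless $\gcd$-type divisibility holds, which follows by writing $m=m'\cdot\gcd(m,n)$, $n=n'\cdot\gcd(m,n)$ and reducing to (\ref{zeroequ}); but since we have pre-arranged $m_0\mid m$ and $n_0\mid m$, the cleaner route is simply $p(u^{k_0}s_{m_0}s_{n_0}^*u^{-l_0})p = u^k s_m\, s_{m_1}^{*}\cdots$, absorbing everything and landing on (\ref{zeroequ}) directly. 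Once step (3) is done, steps (1) and (4) are elementary number theory, and the orthogonality/ordering bookkeeping is exactly Lemma \ref{projectioninequality}.
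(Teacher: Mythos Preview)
Your overall architecture --- pick a generator $u^{k_1}s_{m_1}s_{m_1}^*u^{-k_1}\leq Q$, enlarge $m$, and reduce (ii) to arithmetic constraints on $k$ --- is reasonable and close in spirit to the paper's argument. However, step~(3) is where the proposal breaks down. The displayed expression for the compression is not correct: writing $s_m=(\text{phase})\,s_{m_0}s_{m/m_0}$ and similarly for $n_0$, one gets (up to a unimodular scalar)
\[
p\,v\,p \;=\; u^{k}s_{m}\;s_{m/m_0}^{*}\,u^{\,(k_0-k)/m_0+(k-l_0)/n_0}\,s_{m/n_0}\;s_{m}^{*}u^{-k},
\]
and this only after the preconditions $m_0\mid(k-k_0)$ and $n_0\mid(k-l_0)$ already hold (otherwise the compression vanishes for the trivial reason that $p$ is orthogonal to the range or source projection of $v$). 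In particular the exponents are $(k_0-k)/m_0$ and $(k-l_0)/n_0$, not $k_0-k$ and $l_0-k$, and there is no direct application of (\ref{zeroequ}) to each parenthesised factor separately, since neither is of the form $s_r^*u^ts_r$. Your asserted necessary conditions ``$m/m_0\mid k_0-k$ and $m/n_0\mid l_0-k$'' for nonvanishing are in fact false: take $m_0=n_0=1$, $k_0=0$, $l_0=3$, $m=3$, $k=1$; then $p=u\,s_3s_3^*u^{-1}$ and $v=u^{-3}$, and one computes $p\,v\,p=u^{-2}s_3s_3^*u^{-1}\neq 0$, yet $3\nmid(k_0-k)=-1$. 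So violating your congruences does not force (ii). The vague prescription $m=M^2$ ``or $2M$, whatever gives enough room'' does not repair this; in the example just given $M=1$.

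The paper sidesteps a closed‐form congruence characterisation altogether and argues by successive reduction. Starting from any $u^{k'}s_{m'}s_{m'}^*u^{-k'}\leq Q$, if this projection is not below the range projection $u^{k_0}s_{m_0}s_{m_0}^*u^{-k_0}$ or the source projection $u^{l_0}s_{n_0}s_{n_0}^*u^{-l_0}$ of $v$, one refines inside the orthogonal complement and is done. If it is below both, one computes the domain $g$ and range $f$ of the partial isometry $p\,v\,p$ explicitly (these are generators in $\D_\Nb^\xi$); if either is a proper subprojection of $p$ one again refines orthogonally. The only remaining case, $g=f=p$, forces via Lemma~\ref{projectioninequality} that $m_0=n_0$, whence $k_0\neq l_0$, and then one finishes by adjoining a factor $r$ coprime to $k_0-l_0$ and invoking (\ref{zeroequ}) directly. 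Your plan could be salvaged, but it requires the correct nonvanishing criterion (which involves $\gcd(m/m_0,m/n_0)$ dividing $(k_0-k)/m_0+(k-l_0)/n_0$, not your stated moduli) and a genuine case split when $m_0=n_0$, since in that situation the compression's vanishing depends on $m$ rather than on $k$.
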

\begin{proof}
By the definition of $\D_\Nb^\xi$, there exist $k'\in\Zb$ and $m'\in\Nb^\times$ such that 
$u^{k'}s_{m'}s_{m'}^*u^{-k'} \leq Q$. Thus, it suffices to work with $u^{k'}s_{m'}s_{m'}^*u^{-k'}$ 
instead of $Q$. 

If $u^{k'}s_{m'}s_{m'}^*u^{-k'}$ is not a subprojection 
of either $u^{k_0}s_{m_0}s_{m_0}^*u^{-k_0}$ or $u^{l_0}s_{n_0}s_{n_0}^*u^{-l_0}$, then 
to have (i) and (ii) satisfied it suffices 
to take $k\in\Zb$ and $m\in\Nb^\times$ such that either $u^ks_ms_m^*u^{-k} \leq 
u^{k'}s_{m'}s_{m'}^*u^{-k'}(1 - u^{k_0}s_{m_0}s_{m_0}^*u^{-k_0})$ or  $u^ks_ms_m^*u^{-k} 
\leq u^{k'}s_{m'}s_{m'}^*u^{-k'}(1 - u^{l_0}s_{n_0}s_{n_0}^*u^{-l_0})$, respectively. 

Now, we may assume that $u^{k'}s_{m'}s_{m'}^*u^{-k'}$ is a subprojection 
of both $u^{k_0}s_{m_0}s_{m_0}^*u^{-k_0}$ and $u^{l_0}s_{n_0}s_{n_0}^*u^{-l_0}$. 
Thus, by virtue of Lemma \ref{projectioninequality}, both $m'$ and $k'-k_0$ are divisible by $m_0$, 
while both $m'$ and $k'-l_0$ are divisible by $n_0$. Hence
$$ \begin{aligned}
 (u^{k'}s_{m'}s_{m'}^*u^{-k'}) & (u^{k_0}s_{m_0}s_{n_0}^*u^{-l_0})(u^{k'}s_{m'}s_{m'}^*u^{-k'}) \\
 &  = u^{k'}s_{m'}s_{m'}^*s_{m_0}u^{(k_0-k')/m_0 - (l_0-k')/n_0}s_{n_0}^*s_{m'}s_{m'}^*u^{-k'}
\end{aligned} $$
is a partial isometry with the domain projection 
$$ g=(u^{k'}s_{m'}s_{m'}^*u^{-k'}) u^{l_0-n_0(k_0-k')/m_0}s_{n_0(m'/m_0 \vee m'/n_0)}
   s_{n_0(m'/m_0 \vee m'/n_0)}^*u^{-(l_0-n_0(k_0-k')/m_0)} $$
and the range projection 
$$ f=(u^{k'}s_{m'}s_{m'}^*u^{-k'})u^{k_0-m_0(l_0-k')/n_0}s_{m_0(m'/m_0 \vee m'/n_0)}
   s_{m_0(m'/m_0 \vee m'/n_0)}^*u^{-(k_0-m_0(l_0-k')/n_0)}. $$
Clearly, both $g$ and $f$ are subprojections of $u^{k'}s_{m'}s_{m'}^*u^{-k'}$. If 
either $g\neq u^{k'}s_{m'}s_{m'}^*u^{-k'}$ or $f\neq u^{k'}s_{m'}s_{m'}^*u^{-k'}$ then we can 
argue as above. So suppose that both $g=u^{k'}s_{m'}s_{m'}^*u^{-k'}$ and $f=
u^{k'}s_{m'}s_{m'}^*u^{-k'}$. Then by Lemma \ref{projectioninequality}, $m'$ is divisible by both 
$n_0(m'/m_0 \vee m'/n_0)$ and $m_0(m'/m_0 \vee m'/n_0)$. This can only happen if $m_0=n_0$. 

Now, since $m_0=n_0$, $0\neq k_0-l_0$ is divisible by $m_0$. 
If we take $r\in\Zb$ relatively prime with $k_0-l_0$, then 
$$ (u^{k'}s_rs_r^*u^{-k'}) (u^{k_0}s_{m_0}s_{m_0}^*u^{-l_0})(u^{k'}s_rs_r^*u^{-k'}) = 
   u^{k'}s_rs_r^*u^{k_0-l_0}s_rs_r^*s_{m_0}s_{m_0}^*u^{-k'}=0 $$
by (\ref{zeroequ}). Thus, in this case, it suffices to put $k=k'$ and $m=r \vee m'$. 
\end{proof}

\begin{theorem}\label{purelyinfinite}
$C^*$-algebra ${\mathcal Q}_\Nb^\xi$ is purely infinite. 
\end{theorem}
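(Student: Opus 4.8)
The plan is to follow the classical strategy of Cuntz for proving pure infiniteness: show that for every non-zero positive element $x\in\Q_\Nb^\xi$ there is an element $z$ with $z^*xz=1$, so that $x$ is properly infinite and in fact the hereditary subalgebra it generates contains a full projection equivalent to $1$. Since $\Q_\Nb^\xi$ is already known to be simple (Proposition \ref{simplicity}), it will suffice to produce, for each such $x$, a projection $p$ in the hereditary subalgebra $\overline{x\Q_\Nb^\xi x}$ together with an element implementing a Murray--von Neumann equivalence between $p$ and $1$; simplicity then propagates this to all of $\Q_\Nb^\xi$. First I would use the faithful conditional expectation $G=F\circ E:\Q_\Nb^\xi\to\D_\Nb^\xi$ constructed above: replacing $x$ by a suitable multiple, faithfulness of $G$ gives that $G(x)$ is a non-zero positive element of the commutative algebra $\D_\Nb^\xi$, so after functional calculus and cutting down we may assume there is a non-zero projection $Q\in\D_\Nb^\xi$ with $\|G(x)Q - Q\|$ small, hence $\|QxQ - Q\|<1$ up to a small correction coming from the ``off-diagonal'' part $x - G(x)$.

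The heart of the argument is to kill that off-diagonal part, and this is exactly what Lemma \ref{cuntzlemma} is designed to do. Writing a suitable approximant to $x$ as a finite sum $Q' + \sum_i c_i\,u^{k_i}s_{m_i}s_{n_i}^*u^{-l_i}$ with $Q'\in\D_\Nb^\xi$ and each term in the sum having either $k_i\neq l_i$ or $m_i\neq n_i$, I would apply Lemma \ref{cuntzlemma} repeatedly --- once for each off-diagonal term, each time passing to a smaller projection of the form $e=u^ks_ms_m^*u^{-k}\le Q$ --- to obtain a single projection $e\in\D_\Nb^\xi$, $e\le Q$, such that $e\,(x - G(x))\,e$ is as small as we wish while $exe$ is still close to a scalar multiple of $e$. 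Concretely this yields $\|exe - \lambda e\|<\lambda$ for some $\lambda>0$, so $exe$ is invertible in the corner $e\Q_\Nb^\xi e$ and hence there is $b\in e\Q_\Nb^\xi e$ with $b^*xb = e$. Finally, since $e=u^ks_ms_m^*u^{-k}$ is Murray--von Neumann equivalent in $\Q_\Nb^\xi$ to $s_m^*u^{-k}u^ks_m = 1$ (via the partial isometry $u^ks_m$, using that each $s_m$ is an isometry by (QX1)--(QX3)), we conclude $1\precsim e \sim b^*xb \precsim x$, which together with simplicity shows $\Q_\Nb^\xi$ is purely infinite.

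The main obstacle I anticipate is the bookkeeping in the iterative application of Lemma \ref{cuntzlemma}: one must check that at each step the new projection $e$ produced by the lemma not only annihilates the current off-diagonal term from both sides but also leaves the already-treated terms annihilated and does not destroy the closeness of the diagonal part to a scalar. This is handled because $\D_\Nb^\xi$ is commutative and the projections $u^ks_ms_m^*u^{-k}$ are closed under the operations appearing in the lemma (they form a lattice under $\le$ by (\ref{lcm}) and Lemma \ref{projectioninequality}), so cutting down further by a subprojection preserves the vanishing $e\,(u^{k_i}s_{m_i}s_{n_i}^*u^{-l_i})\,e=0$ once it has been arranged; and the diagonal part, lying in $\D_\Nb^\xi$, only gets multiplied by the scalar $\|G(x)e\|/\|Q\|\approx\|G(x)Q\|$ that we controlled at the outset. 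A second, more routine point is the approximation step: replacing $x$ by a finite linear combination of the spanning elements $a s_m s_n^* b$ requires absorbing the coefficients $a,b\in A=C(\Tb)$ into the $\D_\Nb^\xi$-valued and off-diagonal pieces, which one does using relation (QX2) to move the unitaries $u^t$ past the $s_m$'s and the identity (\ref{zeroequ}) to see which resulting terms survive under the conditional expectation $E$.
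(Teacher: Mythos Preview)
Your proposal is correct and follows essentially the same route as the paper's proof: use the faithful expectation $G$ to locate a projection in $\D_\Nb^\xi$ dominated by the diagonal part, approximate by a finite sum of monomials, apply Lemma~\ref{cuntzlemma} iteratively to annihilate the off-diagonal terms, and then use that $u^ks_ms_m^*u^{-k}$ is equivalent to $1$ via the isometry $u^ks_m$. The only cosmetic difference is that the paper works with $xx^*$ and introduces a positive $d\in\D_\Nb^\xi$ with $d^2G(xx^*)=Q$ so that the diagonal part is \emph{exactly} a projection (rather than your approximate scalar multiple $\lambda e$), and then packages the conclusion as finding $T,R$ with $\|TxR-1\|<2\epsilon$; this normalization trick slightly streamlines the estimates but is not an essential change of strategy.
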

\begin{proof}
Let $0\neq x\in\Q_\Nb^\xi$. Since $\Q_\Nb^\xi$ is simple, to show it is purely infinite as well we must find 
elements $T,R$ such that $TxR$ is invertible, \cite{Cpi}. 
We have $0\neq G(xx^*)\geq 0$. Thus there exists a projection $Q\in\D_\Nb^\xi$ 
such that $G(xx^*)$ is invertible in $Q\D_\Nb^\xi$. So let $d$ be a positive element of $\D_\Nb^\xi$ such that 
$G(dxx^*d)=d^2G(xx^*)=Q$. 

Now, take a small $\epsilon>0$. There exists a finite collection $m_j,n_j\in\Nb^\times$, $k_j,l_j\in\Zb$, 
$\lambda_j\in\Cb$ such that 
$$ ||dxx^*d - \sum_j \lambda_j u^{k_j}s_{m_j}s_{n_j}^* u^{-l_j}|| < \epsilon. $$
Applying conditional expectation $G$ we get 
$$ ||Q - \sum_{j:\; m_j=n_j, k_j=l_j} \lambda_j u^{k_j}s_{m_j}s_{m_j}^* u^{-k_j}|| < \epsilon. $$
Combining the two preceding inequalities, we see that 
\begin{equation}\label{inequality}
||dxx^*d - Q - \sum_{j:\;m_j\neq n_j\; {\text or }\;k_j\neq l_j}\lambda_j u^{k_j}s_{m_j}s_{n_j}^* u^{-l_j}|| 
< 2\epsilon.
\end{equation}
Now, applying repeatedly Lemma \ref{cuntzlemma}, we find a $k\in\Zb$ and an $m\in\Nb^\times$ such that 
$u^ks_ms_m^*u^{-k} \leq Q$ and $(u^ks_ms_m^*u^{-k})(u^{k_j}s_{m_j}s_{n_j}^*u^{-l_j})
(u^ks_ms_m^*u^{-k})=0$ for all $j$ with $m_j\neq n_j$ or $k_j\neq l_j$. Thus inequality 
(\ref{inequality}) yields 
$$ ||(u^ks_ms_m^*u^{-k})dxx^*d(u^ks_ms_m^*u^{-k}) - u^ks_ms_m^*u^{-k} || <2\epsilon. $$
Setting $T:=s_m^*u^{-k}d$ and $R:=x^*du^ks_m$ we have 
$$ || TxR - 1 || <2\epsilon, $$
and $TxR$ is invertible if $\epsilon \leq 1/2$. This proves  that $\Q_\Nb^\xi$ is purely infinite. 
\end{proof}


\vspace{5mm}\noindent
Jeong Hee Hong \\
Department of Data Information \\
Korea Maritime and Ocean University \\
Busan 606--804, South Korea \\
E-mail: hongjh@kmou.ac.kr \\

\smallskip\noindent
Mi Jung Son \\
Department of Data Information \\
Korea Maritime and Ocean University \\
Busan 606--804, South Korea \\
E-mail: mjson@kmou.ac.kr \\

\smallskip\noindent
Wojciech Szyma{\'n}ski\\
Department of Mathematics and Computer Science \\
The University of Southern Denmark \\
Campusvej 55, DK-5230 Odense M, Denmark \\
E-mail: szymanski@imada.sdu.dk

\end{document}